\newtheorem{thm}{Theorem}[section]
\newtheorem{cor}[thm]{Corollary}
\newtheorem{lem}[thm]{Lemma}
\theoremstyle{definition}
\newtheorem*{xrem}{Remark}
\numberwithin{equation}{section}
\def\eq#1{{\rm(\ref{#1})}}
\def\Eq#1#2{\ifthenelse{\equal{#1}{*}}
  {\begin{equation*}\begin{aligned}[]#2\end{aligned}\end{equation*}}
  {\begin{equation}\begin{aligned}[]\label{#1}#2\end{aligned}\end{equation}}}
\newcommand\dom{\mathrm{dom}}
\def\A{\mathscr{A}}
\def\M{\mathscr{M}}
\def\Nn{\mathscr{N}}
\def\P{\mathscr{P}}
\DeclareMathOperator{\Var}{Var}
\DeclareMathOperator{\Lip}{Lip}
\newcommand{\op}[1]{\mathop{\vphantom{\sum}\mathchoice
{\vcenter{\hbox{\LARGE $#1$}}}
{\vcenter{\hbox{\Large $#1$}}}{#1}{#1}}\displaylimits}
\newcommand{\bff}{\mathbf{f}}
\newcommand{\Sm}{\mathcal{S}}
\def\SmLip{\Sm^{Lip}}
\def\Mm{\op{\mathscr{M}}}
\newcommand{\Mik}[1]{P_{#1}}
\def\Ar{\op{\mathscr{A}}}
\newcommand\R{\mathbb{R}}
\newcommand\N{\mathbb{N}}
\newcommand\Z{\mathbb{Z}}
\newcommand\Mb{\mathbf{M}}
\newcommand\Q{\mathbb{Q}}
\newcommand{\QA}[1]{\A_{#1}}
\newcommand{\QAself}[1]{\textbf{A}_{#1}}
\newcommand{\oa}{\overline{a}}
\newcommand{\norm}[1]{\left\| #1 \right\| }
\newcommand{\abs}[1]{\left| #1 \right| }
\newcommand{\mgp}{\M_\otimes}
\newcommand{\dotvec}[3][SKIPPED]{
\ifthenelse{\equal{#1}{SKIPPED}}
  {#2,\dots,#3}
  {\underbrace{#2,\dots,#3}_{#1\text{ entries}}}
}
\newcommand{\Hc}[2][SKIPPED]{
\ifthenelse{\equal{#1}{SKIPPED}}
  {
    \ifthenelse{\equal{#2}{}}
      {\mathscr{H}}
      {\mathscr{H}(#2)}
  }
  {
    \ifthenelse{\equal{#2}{}}
      {\mathscr{H}_{#1}}
      {\mathscr{H}_{#1}(#2)}
  }
}
\newcommand{\Est}[2][SKIPPED]{
\ifthenelse{\equal{#1}{SKIPPED}}
  {
    \ifthenelse{\equal{#2}{}}
      {\mathscr{C}}
      {\mathscr{C}(#2)}
  }
  {
    \ifthenelse{\equal{#2}{}}
      {\mathscr{C}_{#1}}
      {\mathscr{C}_{#1}(#2)}
  }
}
\subjclass[2010]{Primary 26E60; Secondary 26B15, 26A18, 39B12}
\keywords{Gaussian product, invariant means, quasi-arithmetic means, iteration, mean, mean-type mapping}
\author[P. Pasteczka]{Pawe\l{} Pasteczka}
\address{Institute of Mathematics \\ Pedagogical University of Cracow \\ Podchor\k{a}\.zych str. 2, 30-084 Krak\'ow, Poland}
\email{pawel.pasteczka@up.krakow.pl}
\title{On the quasi-arithmetic Gauss-type iteration}
\begin{document}
\begin{abstract}
For a sequence of continuous, monotone functions $f_1,\dots,f_n \colon I \to \mathbb{R}$ ($I$ is an interval) we define the mapping 
$M \colon I^n \to I^n$ as a Cartesian product of quasi-arithmetic means generated by $f_j$-s. It is known that, for every initial vector, the iteration sequence of this mapping tends to the diagonal of $I^n$.

We will prove that whenever all $f_j$-s are $\mathcal{C}^2$ with nowhere vanishing first derivative, then this convergence is quadratic. Furthermore, the limit $\frac{\text{Var}\, M^{k+1}(v)}{(\text{Var}\, M^{k}(v))^2}$ will be calculated in a nondegenerated case.
\end{abstract}
\maketitle

\section{Introduction}
In 1800 (this year is due to \cite{ToaToa05}) Gauss introduced the arithmetic-geometric mean as a limit in the following two-term recursion:
\Eq{AGM:ITP}{
x_{k+1}=\frac{x_k+y_k}2, \qquad y_{k+1}=\sqrt{x_ky_k},
}
where $x_0=x$ and $y_0=y$ are two positive parameters. 
Gauss \cite[p.\ 370]{Gau18} proved that both $(x_k)_{k=1}^\infty$ and 
$(y_k)_{k=1}^\infty$ converge to a common limit, which is called arithmetic-geometric mean of the initial values $x_0$ 
and $y_0$. J.~M.~Borwein and P.~B.~Borwein \cite{BorBor87} extended some earlier ideas \cite{FosPhi84a,Leh71,Sch82} and 
generalized this iteration to a vector of continuous, strict means of an arbitrary length.
For several recent results about Gaussian product of means see the papers by Baj\'ak--P\'ales 
\cite{BajPal09b,BajPal09a,BajPal10,BajPal13}, by Dar\'oczy--P\'ales \cite{Dar05a,DarPal02c,DarPal03a}, 
by G{\l}azowska \cite{Gla11b,Gla11a}, by Matkowski \cite{Mat99b,Mat02b,Mat05,Mat13}, by Matkowski--P\'ales 
\cite{MatPal15}, and by the author \cite{Pas16a}.

Recall that for a given interval $I$, a \emph{mean} defined on $I$ is any function $\M \colon \bigcup_{n=1}^{\infty} I^n \to I$ such that $\min(a) \le \M(a) \le \max(a)$  for every admissible $a$. The mean is \emph{strict} if $\min(a)<\M(a)<\max(a)$ unless $a$ is a constant vector.

It is known, \cite[Theorem 8.2]{BorBor87}, that for every twice continuously differentiable, strict means $\M$, $\Nn$ and sequences
\Eq{*}{
x_{k+1}=\M(x_k,y_k), \qquad y_{k+1}=\Nn(x_k,y_k), \quad k \in \N_{+} \cup \{ 0 \}.
} 
the difference $|x_k-y_k|$ tends to zero quadratically for all $x_0=x$ and $y_0=y$.

Following \cite[section~8.7]{BorBor87}, we will consider the iteration of multidimensional means. Given a natural number $n \in \N$ and a vector of means $(\M_1,\dots,\M_n)$ defined on a common interval~$I$. Let us define the mapping $\Mb  \colon I^n \to I^n$ by 
\Eq{*}{
\Mb(a):=\big(\M_1(a),\dots,\M_n(a)\big), \qquad a \in I^n.
}

Whenever for every $i \in \{1,\dots,N\}$ the limit of its iteration sequence $\lim_{k \rightarrow \infty} [\Mb^k(a)]_i$ exist and do not depend on $i$, we call it  \emph{Gaussian product} of $(\M_i)$ and denote it by $\mgp(a)$. Matkowski used to call the mean $\mgp$ the \emph{$\Mb$-invariant mean}. Indeed, $\mgp$ can be characterized as a unique mean satisfying the equality $\mgp \circ \Mb = \mgp$ (cf. e.g. \cite{Mat99b}). He also proved that whenever all means are continuous and strict then $\mgp$ is a uniquely defined continuous and strict mean. 

Some special case is that for some $k_0 \in \N$ the vector $\Mb^{k_0}(a)$ is constant. Then, for all $k \ge k_0$, we have $\Mb^{k}(a)=\Mb^{k_0}(a)$. In particular each entry of this vector equals $\mgp(a)$. If it is the case for some nonconstant vector $a$, then we will call such iteration process to be \emph{degenerated}. It can be easily verified that under some mild condition regarding comparability of means iteration process is never degenerated. Such results are however outside the scope of this paper and are omitted.

Gauss' iteration process in a case when all means are quasi-arithmetic one will be of our interest. It was already under investigation in \cite{Pas16a}. We are going to continue the research in this area. In particular we will prove the multidimensional counterpart of \cite[Theorem 8.2]{BorBor87} in a case when all consider means are quasi-arithmetic. Furthermore we will show that, under some conditions, not only the convergence is quadratic, but also the characteristic ratio is closely related to so-called Arrow-Pratt index.

\section{Quasi-arithmetic means}
Quasi-arithmetic means were introduced in series of nearly simultaneous papers in a beginning of 1930s \cite{Def31,Kol30,Nag30} as a generalization of already mentioned family of power means. 
For a continuous and strictly monotone function $f \colon I \to \R$ ($I$ is an interval) and a vector $a=(a_1,a_2,\dots,a_n) \in I^n$, $n \in \N$ we define 
\Eq{*}{
\QA{f}(a):=f^{-1}\left( \frac{f(a_1)+f(a_2)+\cdots+f(a_n)}{n} \right).
}

It is easy to verify that for
$I=\R_+$ and $f=\pi_p$, where $\pi_p(x):=x^p$ if $p\ne 0$ and $\pi_0(x):=\ln x$, then the mean $\QA{f}$ coincides with the $p$-th power mean (from now on denoted by $\P_p$); this fact had been already noticed by Knopp \cite{Kno28} before quasi-arithmetic means were formally introduced.

In the course of dealing with the Gaussian iteration process we will use the notation of Arrow-Pratt index \cite{Arr65,Pra64}, which was also investigated by Mikusi\'nski \cite{Mik48}.
Whenever $f \colon I \to \R$ is twice differentiable with nowhere vanishing first derivative we can define the operator $\Mik{f}:=f''/f'$. It can be proved that comparability of quasi-arithmetic means is equivalent to pointwise comparability of respective Arrow-Pratt indexes (see \cite{Mik48} for details).

Following the idea from \cite{Pas16a} we will assume that all consider function are smooth enough to apply operator $\Mik{}$. Moreover, for technical reasons, we assume that second derivative is of {\it almost bounded variation} (finite variation restricted to every compact interval; cf. \cite[p.~135]{KucChoGer90}). 
Using this definition we introduce the class 
\Eq{*}{
\Sm(I):=\{f \in \mathcal{C}^2(I) \colon f' \ne 0 \text{ and } f'' \text{ is of almost bounded variation }\}.
}
Obviously, as $f' \ne 0$, each element belonging to $\Sm(I)$ is a continuous and strictly monotone function, and therefore it generates the quasi-arithmetic mean. The assumption that $f''$ is of almost bounded variation is technical, however important from the point of view of the present paper (this is also the setting which was extensively used in the previous paper \cite{Pas16a}). 

Following the idea from \cite{Pas16a} we are going to deal with the Gaussian iteration of quasi-arithmetic means. Define, for the vector $\bff=(f_j)_{j=1}^n$ of continuous, strictly monotone functions on $I$, the mapping $\QAself{\bff} \colon I^n \to I^n$ by 
\Eq{*}{
\QAself{\bff}(a):=\big(\QA{f_1}(a),\dots,\QA{f_n}(a)\big).
}
In fact $\QAself{\bff}$ is the quasi-arithmetic counterpart of the function $\Mb$, which appears in the definition of Gaussian product. Then it is known that there exists a unique continuous and strict mean $\QA{\otimes} \colon I^n \to I$ such that $\QA{\otimes} \circ \QAself{\bff}=\QA{\otimes}$. It has also further implications but let us introduce some necessary notations first. For a vector $a$ of real numbers we denote its arithmetic mean, variance, and spread briefly by $\oa$, $\Var(a)$, and $\delta(a):=\max(a)-\min(a)$, respectively.

It is known that for every vector $a \in I^n$, the sequence $(\Var(\QAself{\bff}^k(a)))_{k \in \N}$ tends to zero. Moreover, due to \cite{Pas16a}, if $\bff \in \Sm(I)^n$  then this convergence is double exponentially with fractional base. We will prove that, in a non-degenerated case, this sequence tends to zero quadratically and, moreover, we will calculate the limit
\Eq{*}{
\lim_{k \to \infty} \frac{\Var(\QAself{\bff}^{k+1}(a))}{\Var(\QAself{\bff}^k(a))^2}.
}

%\vskip25mm\hline\vskip25mm

\subsection{Approximate value of quasi-arithmetic means} We are now heading towards calculation of quasi-arithmetic mean in the spirit of Taylor. In fact the crucial identity was already established in the previous paper. Let us recall this result (Riemann–Stieltjes integral is used in its wording).

\begin{lem}[\cite{Pas16a}, Lemma 4.1]
\label{lem:QAexpansion}
For every $f \in \Sm(I)$ and $a \in I^n$, $n \in \N$,
\Eq{*}{
\QA{f}(a)&=\oa + \tfrac12 \Var(a) \cdot \Mik{f}(\oa) +R_f(a)+S_f(a),
}
where
\Eq{*}{
R_f(a)&:= \frac{1}{2n \cdot f'(\oa)} \cdot \sum_{i=1}^n \int_{\oa}^{a_i} (a_i -t)^2 df''(t), \\
S_f(a)&:= \int_{\oa}^{\QA{f}(a)} \frac{\big(f(u)-f(\QA{f}(a))\big)f''(u)}{f'(u)^2}du.
}
\end{lem}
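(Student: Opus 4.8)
The plan is to split the identity into two independent pieces: a second-order Taylor expansion of $f$ at the arithmetic mean $\oa$, which produces the main term and the remainder $R_f$, and an exact ``inversion'' identity accounting for the passage from $f(\QA{f}(a))$ back to $\QA{f}(a)$, which produces $S_f$. Throughout write $m:=\QA{f}(a)$, so that by definition $f(m)=\tfrac1n\sum_{i=1}^n f(a_i)$.

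First I would expand each $f(a_i)$ about $\oa$ to second order. Starting from the first-order integral remainder $f(b)=f(\oa)+f'(\oa)(b-\oa)+\int_{\oa}^{b}(b-t)f''(t)\,dt$ and integrating by parts in the Riemann--Stieltjes sense (legitimate precisely because $f\in\Sm(I)$ forces $f''$ to be of bounded variation on the compact interval spanned by $a$), I obtain $f(a_i)=f(\oa)+f'(\oa)(a_i-\oa)+\tfrac12 f''(\oa)(a_i-\oa)^2+\tfrac12\int_{\oa}^{a_i}(a_i-t)^2\,df''(t)$. Averaging over $i$ and using $\sum_i(a_i-\oa)=0$ together with $\tfrac1n\sum_i(a_i-\oa)^2=\Var(a)$ kills the linear term and yields $f(m)-f(\oa)=\tfrac12 f''(\oa)\Var(a)+f'(\oa)R_f(a)$; dividing by $f'(\oa)$ recovers the main term $\tfrac12\Var(a)\Mik{f}(\oa)$ together with $R_f(a)$.

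Second I would prove the exact identity $S_f(a)=(m-\oa)-\frac{f(m)-f(\oa)}{f'(\oa)}$, which is exactly what remains once the previous relation is subtracted from the target. For this, observe that $\frac{f''(u)}{f'(u)^2}=-\frac{d}{du}\frac{1}{f'(u)}$, so the integrand defining $S_f$ equals $-(f(u)-f(m))\,d\!\left(\tfrac{1}{f'(u)}\right)$; integrating by parts (now an ordinary integration by parts, since $1/f'$ is $\mathcal{C}^1$ and $f$ is $\mathcal{C}^2$) converts $\int_{\oa}^m q\,dp$ with $p=f-f(m)$ and $q=1/f'$ into $\int_{\oa}^m du=m-\oa$, while the boundary term contributes exactly $-\frac{f(m)-f(\oa)}{f'(\oa)}$, the endpoint at $m$ vanishing because $p(m)=0$. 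Adding the two pieces gives $m-\oa=\tfrac12\Var(a)\Mik{f}(\oa)+R_f(a)+S_f(a)$, which is the assertion.

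The only genuinely delicate point is the first integration by parts: it is a Stieltjes-type manipulation against $df''$, and its validity rests entirely on $f''$ being of bounded variation on $[\min a,\max a]$. This is precisely why the hypothesis $f\in\Sm(I)$ is imposed, and it is the reason the remainder is most naturally written with a Riemann--Stieltjes integral rather than by assuming $f\in\mathcal{C}^3$. Everything else is either an averaging computation or an elementary integration by parts, and the second identity is an exact equality valid for any $\mathcal{C}^2$ function with non-vanishing derivative.
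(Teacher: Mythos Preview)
Your argument is correct. The two-step decomposition you give --- a Stieltjes-form Taylor expansion of each $f(a_i)$ about $\oa$ yielding $\dfrac{f(m)-f(\oa)}{f'(\oa)}=\tfrac12\Var(a)\,\Mik{f}(\oa)+R_f(a)$, followed by the exact identity $S_f(a)=(m-\oa)-\dfrac{f(m)-f(\oa)}{f'(\oa)}$ obtained by writing $f''/f'^2=-\big(1/f'\big)'$ and integrating by parts --- is precisely the intended route, and you correctly isolate the one place where the hypothesis $f\in\Sm(I)$ (bounded variation of $f''$ on compacta) is actually used.

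Note that the present paper does not supply its own proof of this lemma: it is quoted verbatim from \cite{Pas16a}, Lemma~4.1. Your reconstruction matches the argument there. One minor wording slip: in your second step the integral you start from is $-\int p\,dq$ (with $p=f-f(m)$, $q=1/f'$), and integration by parts produces the boundary term plus $\int q\,dp=\int du$; your text reads as if $\int q\,dp$ is the starting point, but the computation you describe is the right one.
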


It was also proved \cite[Lemma 4.2]{Pas16a} that 
\Eq{Lem4.2}{
\abs{R_f(a)} \le \frac1{6k}  \cdot \exp(\norm{\Mik{f}}_*) \cdot \sum_{i=1}^k \abs{a_i-\oa}^3, \quad
\abs{S_f(a)} \le  (\QA{f}(a) - \oa)^2 \cdot \exp(\norm{\Mik{f}}_*),}
where the $*$-norm is defined as $\norm{g}_*:=\sup\limits_{a,\,b \in \:\dom(g)} \abs{\int_a^b g(t)dt}$. 

What was not noticed is that if the second derivative of $f$ is locally Lipschitz then the error terms can be majorized much more efficient. We are going to prove this in a while. First, define $\SmLip(I):= \{f \in \Sm(I) \colon f'' \text{ is locally Lipschitz}\}$; $\Sm_K(I):=\{f \in\Sm(I) \colon \norm{\Mik{f}}_\infty \le K\}$ for $K>0$ and $\SmLip_K(I):=\SmLip(I)\cap \Sm_K(I)$.

For the purpose of this estimation let us make purely technical assumption $K=1$, which will be omitted soon.

\begin{lem}
\label{lem:Rest_estim}
For every $f \in \SmLip_1(I)$ and $a \in I^n$, $n \in \N$,
\Eq{*}{
|R_f(a)| \le \frac{\Lip(f'')}{2\abs{f'(\oa)}} \cdot \delta(a) \Var(a) \qquad\text{ and }\qquad |S_f(a)| \le \frac{\alpha^2}{4} \exp(\norm{\Mik{f}}_*) \delta(a)^4,
}
where $\alpha:=\tfrac{3+7e}3$.
\end{lem}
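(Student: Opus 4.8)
The estimate for $R_f$ is the routine half, and I would dispose of it first. Since $f\in\SmLip_1(I)$, its second derivative is Lipschitz on the compact interval $[\min(a),\max(a)]$, so the Riemann--Stieltjes measure $df''$ is absolutely continuous there with density bounded in modulus by $\Lip(f'')$. Hence for each $i$,
\[ \abs{\int_{\oa}^{a_i}(a_i-t)^2\,df''(t)}\le\Lip(f'')\abs{\int_{\oa}^{a_i}(a_i-t)^2\,dt}=\frac{\Lip(f'')}{3}\abs{a_i-\oa}^3. \]
Summing over $i$, using $\abs{a_i-\oa}\le\delta(a)$ and $\sum_{i=1}^n(a_i-\oa)^2=n\Var(a)$, gives
\[ \abs{R_f(a)}\le\frac{\Lip(f'')}{6n\abs{f'(\oa)}}\sum_{i=1}^n\abs{a_i-\oa}^3\le\frac{\Lip(f'')}{6\abs{f'(\oa)}}\,\delta(a)\Var(a), \]
which is even slightly stronger than the asserted bound (the factor $\tfrac16$ in place of $\tfrac12$ leaves room to spare).

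For $S_f$ the plan is to reduce everything to a single scalar estimate. Writing $m:=\QA{f}(a)$, I would first prove
\[ \abs{m-\oa}\le\frac{\alpha}{2}\,\delta(a)^2, \]
and then combine it with the estimate $\abs{S_f(a)}\le(m-\oa)^2\exp(\norm{\Mik{f}}_*)$ from \eq{Lem4.2}: squaring the displayed inequality yields $(m-\oa)^2\le\frac{\alpha^2}{4}\delta(a)^4$, whence $\abs{S_f(a)}\le\frac{\alpha^2}{4}\exp(\norm{\Mik{f}}_*)\delta(a)^4$ at once.

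To establish the bound on $\abs{m-\oa}$ I would split on the size of $\delta(a)$. When $\delta(a)\le1$, I apply the expansion of Lemma~\ref{lem:QAexpansion}, $m-\oa=\tfrac12\Var(a)\Mik{f}(\oa)+R_f(a)+S_f(a)$, and estimate the three summands separately. The leading term obeys $\tfrac12\Var(a)\abs{\Mik{f}(\oa)}\le\tfrac12\delta(a)^2$, since $\norm{\Mik{f}}_\infty\le1$ and $\Var(a)\le\delta(a)^2$. For the two remainders I invoke \eq{Lem4.2}, observing that $R_f$ and $S_f$ depend on $f$ only through its restriction to $[\min(a),\max(a)]$, on which $\abs{\int_u^v\Mik{f}}\le\norm{\Mik{f}}_\infty\,\delta(a)\le1$ for all $u,v$ there, so that the exponential factors do not exceed $e$; together with $\abs{a_i-\oa}\le\delta(a)\le1$ and the a~priori bound $\abs{m-\oa}\le\delta(a)\le1$ this gives $\abs{R_f(a)}\le\tfrac{e}{6}\delta(a)^2$ and $\abs{S_f(a)}\le e\,\delta(a)^2$. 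Adding the three contributions,
\[ \abs{m-\oa}\le\Bigl(\tfrac12+\tfrac{e}{6}+e\Bigr)\delta(a)^2=\frac{3+7e}{6}\,\delta(a)^2=\frac{\alpha}{2}\,\delta(a)^2, \]
exactly the asserted constant. If instead $\delta(a)>1$, the crude inequality $\abs{m-\oa}\le\delta(a)$ already suffices: \eq{Lem4.2} then gives $\abs{S_f(a)}\le\exp(\norm{\Mik{f}}_*)\delta(a)^2\le\tfrac{\alpha^2}{4}\exp(\norm{\Mik{f}}_*)\delta(a)^4$, because $\delta(a)^2\le\delta(a)^4$ and $\tfrac{\alpha^2}{4}\ge1$.

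The delicate point is the regime $\delta(a)\le1$. The estimate for $\abs{m-\oa}$ is self-referential, as $S_f(a)$ itself reappears inside the expansion of $m-\oa$; I expect the circularity to be the main obstacle, and it has to be broken by feeding in the rough a~priori bound $\abs{m-\oa}\le\delta(a)$ before upgrading it to the quadratic one. The remaining work is careful bookkeeping: every exponential factor must be kept tied to the local interval $[\min(a),\max(a)]$ so that it stays below $e$ when $\delta(a)\le1$, and the numerical constants must be tracked so that they assemble precisely into $\alpha=\tfrac{3+7e}{3}$.
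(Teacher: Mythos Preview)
Your argument is correct, and in one respect it is cleaner than the paper's. For $R_f$ the paper integrates by parts and invokes the mean-value theorem twice to rewrite $R_f(a)=\tfrac12\Var(a)\,\frac{f''(\eta)-f''(\oa)}{f'(\oa)}$ for some intermediate $\eta$, and only then applies the Lipschitz condition; this produces the constant~$\tfrac12$. Your direct density bound on the Stieltjes integral is shorter and yields the sharper constant~$\tfrac16$. For $S_f$ the paper simply quotes \cite[Lemma~4.3]{Pas16a} for the estimate $\abs{\QA{f}(a)-\oa}\le\tfrac{\alpha}{2}\delta(a)^2$ and combines it with \eq{Lem4.2}, whereas you reprove that lemma in place: your case split $\delta(a)\le1$ versus $\delta(a)>1$, the localisation of the $*$-norm to $[\min a,\max a]$ so that the exponential factor is at most~$e$, and the bootstrap from $\abs{m-\oa}\le\delta(a)$ to the quadratic bound are exactly the ingredients behind that cited result, and your arithmetic recovering $\tfrac12+\tfrac{e}{6}+e=\tfrac{\alpha}{2}$ is correct. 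So the two proofs agree on~$S_f$ up to whether the auxiliary bound is imported or derived, while your treatment of $R_f$ is a genuinely more elementary alternative.
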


\begin{proof}
By mean-value theorem there exist $\xi_1,\dots,\xi_n,\eta \in (\min a,\,\max a)$ such that
\Eq{*}{
R_f(a)&= \frac{1}{2n \cdot f'(\oa)} \cdot \sum_{i=1}^n \int_{\oa}^{a_i} (a_i -t)^2 df''(t) \\
&= \frac{1}{2n \cdot f'(\oa)} \cdot \sum_{i=1}^n  \Big( -(a_i-\oa)^2 f''(\oa) - 2 \int_{\oa}^{a_i} (a_i-t) f''(t) dt \Big)\\
&= \frac{1}{2n \cdot f'(\oa)} \cdot \sum_{i=1}^n (a_i-\oa)^2 \big(f''(\xi_i)-f''(\oa) \big) \\
&= \frac{1}{2n} \cdot \sum_{i=1}^n (a_i-\oa)^2 \frac{f''(\eta)-f''(\oa)}{f'(\oa)}
= \frac{\Var(a)}{2}  \cdot \frac{f''(\eta)-f''(\oa)}{f'(\oa)}.
}
Therefore
\Eq{*}{
|R_f(a)| = \frac{\abs{\eta-\oa} \Var(a)}{2\abs{f'(\oa)}}  \cdot \abs{\frac{f''(\eta)-f''(\oa)}{\eta-\oa}} \le \frac{\Lip(f'')}{2\abs{f'(\oa)}} \cdot \delta(a) \Var(a).
}

We will now prove the second inequality. By \eq{Lem4.2}, we have 
\Eq{*}{
|S_f(a)| \le  (\QA{f}(a) - \oa)^2 \cdot \exp(\norm{\Mik{f}}_*).
}
Furthermore, by \cite[Lemma 4.3]{Pas16a}, we get $\abs{\QA{f}(a)-\oa} \le \frac{\alpha}2 \delta(a)^2$. Thus
\Eq{*}{
|S_f(a)| &\le \frac{\alpha^2}{4} \exp(\norm{\Mik{f}}_*) \delta(a)^4,
}
what was to be proved.
\end{proof}

\section{Main result}
Binding two results above we can establish the main theorem of the present note. 
In order to make the notation more compact the brief sum-type notation of mean will be used (that is we will write $\Mm_{k=1}^n(t_k)$ instead of $\M(t_1,\dots,t_n)$\,). 
Additionally, for the same reason, we will use the $\pm$ notation of the remainder (with the natural interpretation).

\begin{thm}
Let $I$ be an interval, $K>0$, $n\in \N$, $(f_j)_{j=1}^n \in \SmLip_K(I)^n$, and $a$ be a vector having entries in $I$. Then
\Eq{E:main}{
\Var(\QAself{\bff}(a))=\frac14 \Var(a)^2 \Var(\Mik{\bff}(\oa)) \pm 4CK^5\delta(a)^5 \pm (3C^2+C_2^2) K^6 \delta(a)^6,
}
where $\Mik{\bff}\colon I \to \R^n$ is defined by $\Mik\bff(x):= (\Mik{f_1}(x),\dots,\Mik{f_n}(x))$, $\alpha:=\tfrac{3+7e}3$, and
\Eq{*}{
C:=\Ar_{k=1}^n \left(\frac{\Lip(f_k'')}{2K^2\abs{f_k'(\oa)}}+\frac{\alpha^2e}4\right),\quad 
C_2:=\op{\P_2}_{k=1}^n \left(\frac{\Lip (f_k'')}{2K^2\abs{f_k'(\oa)}}+\frac{\alpha^2e}4 \right).
}
\end{thm}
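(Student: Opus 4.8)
The plan is to expand $\Var(\QAself{\bff}(a))$ directly using the representation of each coordinate supplied by Lemma~\ref{lem:QAexpansion}. For each $j$ we have $\QA{f_j}(a)=\oa+\tfrac12\Var(a)\Mik{f_j}(\oa)+R_{f_j}(a)+S_{f_j}(a)$, so the vector $\QAself{\bff}(a)$ is the constant vector $\oa\vone$ plus a ``principal'' term $\tfrac12\Var(a)\Mik\bff(\oa)$ plus an error vector whose $j$-th entry is $R_{f_j}(a)+S_{f_j}(a)$. Since variance is translation-invariant, the constant shift $\oa$ drops out, and I would write $\Var(\QAself{\bff}(a))=\Var\!\big(\tfrac12\Var(a)\Mik\bff(\oa)+E\big)$, where $E=(R_{f_j}(a)+S_{f_j}(a))_{j=1}^n$. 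Expanding the variance of a sum as $\Var(X+E)=\Var(X)+2\,\mathrm{Cov}(X,E)+\Var(E)$, the leading term is $\tfrac14\Var(a)^2\Var(\Mik\bff(\oa))$, exactly the announced main term, and the two cross/error contributions must be absorbed into the two signed remainders.

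The next step is to control the error entries $R_{f_j}(a)+S_{f_j}(a)$ uniformly in $j$ using Lemma~\ref{lem:Rest_estim}. That lemma is stated for $\SmLip_1(I)$; to handle general $K$ I would rescale, replacing each $f_j$ by a suitably normalized function so that $\norm{\Mik{f_j}}_\infty\le 1$, apply the estimates, and then track how the bounds pick up powers of $K$ — this is the reason the final constants $C,C_2$ carry the normalizing factors $K^2$ in the denominators and why the remainders appear with $K^5$ and $K^6$. After rescaling, each $|R_{f_j}(a)|$ is bounded by a multiple of $\delta(a)\Var(a)$ and each $|S_{f_j}(a)|$ by a multiple of $\delta(a)^4$; combining these and using $\Var(a)\le\tfrac14\delta(a)^2$ lets me bound each error entry by a constant times $\delta(a)^3$, with the constant expressible through the quantity $\tfrac{\Lip(f_k'')}{2K^2|f_k'(\oa)|}+\tfrac{\alpha^2 e}{4}$ that defines $C$ and $C_2$.

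To package these coordinatewise bounds into bounds on $\mathrm{Cov}$ and $\Var$ of the error vector, I would invoke the Cauchy–Schwarz inequality in the form $|\mathrm{Cov}(X,E)|\le\sqrt{\Var(X)\Var(E)}$ together with the fact that $\Var(E)\le\tfrac1n\sum_j(E_j-\bar E)^2\le \max_j E_j^2$. The appearance of the arithmetic mean $C=\Ar_k(\cdots)$ and the quadratic (power-2) mean $C_2=\op{\P_2}_k(\cdots)$ in the statement signals precisely which symmetric function of the coordinate constants governs each term: the cross term $2\,\mathrm{Cov}(X,E)$, being linear in $E$ and hence in the averaged constants, produces the $C$-bound of order $\delta(a)^5$ (since $\Var(a)\le\tfrac14\delta^2$ contributes $\delta^2$ and $E\sim\delta^3$ gives $\delta^5$ after the covariance/Cauchy–Schwarz step), whereas the genuinely quadratic term $\Var(E)$ is controlled by the mean of squares $C_2^2$ and contributes at order $\delta(a)^6$; the extra $3C^2$ in the sixth-order remainder comes from bounding the covariance term's own second-order correction via $C^2$ rather than $C_2^2$.

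The main obstacle I anticipate is bookkeeping the constants so that the signed-remainder form in \eq{E:main} comes out with exactly the stated coefficients $4$, $3C^2+C_2^2$, and the correct powers of $K$ and $\delta(a)$. The analytic content is light — every individual bound is already furnished by Lemma~\ref{lem:Rest_estim} and \eq{Lem4.2}, and the variance expansion is elementary — but matching the arithmetic-mean constant $C$ to the linear (covariance) term and the power-mean constant $C_2$ to the quadratic (variance-of-error) term, while simultaneously justifying the $K$-rescaling uniformly across coordinates and controlling the interaction between $R$ and $S$ errors, is where the care is required. I expect the convenient reduction $\Var(a)\le\tfrac14\delta(a)^2$ to be used repeatedly to convert all variance factors into powers of the spread $\delta(a)$, yielding the homogeneous degree-$5$ and degree-$6$ remainders.
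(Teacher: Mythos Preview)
Your overall strategy matches the paper's: reduce to $K=1$ by rescaling, insert the expansion from Lemma~\ref{lem:QAexpansion}, and decompose $\Var(X+E)=\Var(X)+2\,\mathrm{Cov}(X,E)+\Var(E)$ with $X_j=\tfrac12\Var(a)\Mik{f_j}(\oa)$ and $E_j=R_{f_j}(a)+S_{f_j}(a)$. The paper does exactly this, written as a coordinate-by-coordinate computation of $\big(\QA{f_j}(a)-\Ar_k\QA{f_k}(a)\big)^2$ followed by averaging over~$j$. Two concrete points in your plan need repair. First, the Cauchy--Schwarz bound $|\mathrm{Cov}(X,E)|\le\sqrt{\Var(X)\Var(E)}$ is the wrong tool: it produces a $C_2$-type constant for the $\delta(a)^5$ remainder, and $C_2\le 4C$ is not true in general. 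The paper instead bounds directly, using $|X_j-\bar X|\le\Var(a)$ (from $|\Mik{f_j}|\le1$) and $|E_j-\bar E|\le(c_j+C)\,\delta(a)^3$ with $c_j:=\tfrac{\Lip(f_j'')}{2|f_j'(\oa)|}+\tfrac{\alpha^2e}{4}$. Averaging $2(c_j+C)$ gives $4C$, and averaging $(c_j+C)^2$ gives exactly $3C^2+C_2^2$; so the $3C^2$ is not a ``second-order correction of the covariance'' but simply the cross terms when you expand $(C+c_j)^2$.

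Second, you are missing a necessary case split. Only when $\delta(a)<1$ can you pass from $|S_{f_j}(a)|\lesssim\delta(a)^4$ to a $\delta(a)^3$ bound and justify $\exp(\norm{\Mik{f_j}}_*)\le e$ (the $*$-norm on the relevant subinterval is at most $\norm{\Mik{f_j}}_\infty\cdot\delta(a)\le1$). The paper handles $\delta(a)\ge1$ separately and trivially, noting that the allowed remainder already dominates both $\Var(\QAself{\bff}(a))$ and the principal term. Without this split your claim that ``each error entry is bounded by a constant times $\delta(a)^3$'' is false. A minor point: the paper uses the crude $\Var(a)\le\delta(a)^2$ rather than Popoviciu's $\tfrac14\delta(a)^2$; the looser bound is what makes the linear coefficient come out to exactly~$4C$.
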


Recall that $\A$ and $\P_2$ stand for arithmetic and quadratic mean, respectively.

\begin{proof}
Applying the machinery described in \cite[section 4.1]{Pas16a} we can use the mapping 
\Eq{*}{
\SmLip_K(I) \ni f(x) \mapsto f(x/K) \in \SmLip_1(K \cdot I)
}
to each function $f_k$. Therefore we will assume, without loss of generality, that $K=1$. In fact to make such assumption possible, we need to verify that the statement in the theorem in both setups are equivalent. Precise calculations are not very simple, but rather straightforward. 

In the case when $\delta(a)\ge1$ we have $C \ge \alpha^2e/4 >1$, thus the admissible error on the right hand side is at least $3 \delta(a)^6$.
Meanwhile
\Eq{*}{
\abs{\Var(\QAself{\bff}(a))-\frac14 \Var(a)^2 \Var(\Mik{\bff}(\oa))} &\le \Var(\QAself{\bff}(a))+\frac14 \Var(a)^2 \Var(\Mik{\bff}(\oa)) \\
&\le \delta(a)^2+ \frac{\delta(a)^4}4
= \frac 54 \delta(a)^4
\le \frac 54 \delta(a)^6.
}

From now on we will assume that $\delta(a)<1$. By Lemmas \ref{lem:QAexpansion} and \ref{lem:Rest_estim}, 

\Eq{*}{
\abs{\Ar_{k=1}^n \QA{f_k}(a)-\oa-\Ar_{k=1}^n \tfrac12 \Var(a) \cdot \Mik{f_k}(\oa) }&=  \abs{\Ar_{k=1}^n R_{f_k}(a)+S_{f_k}(a)} \\
\le \Ar_{k=1}^n \frac{\Lip(f_k'')}{2\abs{f_k'(\oa)}} &\cdot \delta(a) \Var(a) + \frac{\alpha^2}{4} \Ar_{k=1}^n \exp(\norm{\Mik{f_k}}_*) \delta(a)^4.
}

We know that $\Var(a) \le \delta(a)^2$, thus we obtain
\Eq{*}{
\Ar_{k=1}^n \QA{f_k}(a)=\oa+\Ar_{k=1}^n \tfrac12 \Var(a) \cdot \Mik{f_k}(\oa)  \pm \Ar_{k=1}^n \frac{\Lip(f_k'')}{2\abs{f_k'(\oa)}} \cdot \delta(a)^3 \pm \frac{\alpha^2}{4} \Ar_{k=1}^n \exp(\norm{\Mik{f_k}}_*) \delta(a)^4.
}
As $\delta(a)<1$ we get $\delta(a)^4 \le \delta(a)^3$ and $\exp(\norm{\Mik{f_k}}_*) \le e$. Therefore
\Eq{*}{
\Ar_{k=1}^n \QA{f_k}(a)=\oa+\Ar_{k=1}^n \tfrac12 \Var(a) \cdot \Mik{f_k}(\oa)  \pm \Big( \Ar_{k=1}^n \frac{\Lip(f_k'')}{2\abs{f_k'(\oa)}} + \frac{\alpha^2e}4 \Big) \cdot \delta(a)^3.
}
We can express it briefly as 
\Eq{*}{
\Ar_{k=1}^n \QA{f_k}(a)=\oa+\Ar_{k=1}^n \tfrac12 \Var(a) \cdot \Mik{f_k}(\oa)  \pm C \delta(a)^3.
}
Thus, using Lemmas \ref{lem:QAexpansion} and \ref{lem:Rest_estim} again, we have 
\Eq{*}{
\QA{f_j}(a)-\Ar_{k=1}^n \QA{f_k}(a)= \tfrac12 \Var(a) \cdot \Big(\Mik{f_j}(\oa)-\Ar_{k=1}^n\Mik{f_k}(\oa)\Big)  \pm \Big(C+\frac{\Lip(f_j'')}{2\abs{f_j'(\oa)}}+\frac{\alpha^2e}4\Big) \delta(a)^3.
}

Therefore
\Eq{*}{
\Big(\QA{f_j}(a)-\Ar_{k=1}^n \QA{f_k}(a)\Big)^2&= \tfrac14 \Var(a)^2 \cdot \Big(\Mik{f_j}(\oa)-\Ar_{k=1}^n\Mik{f_k}(\oa)\Big)^2 \\
&\qquad \pm \Big(C+\frac{\Lip(f_j'')}{2\abs{f_j'(\oa)}}+\frac{\alpha^2e}4\Big) \delta(a)^3 \Var(a) \cdot \abs{\Mik{f_j}(\oa)-\Ar_{k=1}^n\Mik{f_k}(\oa)}\\
&\qquad \pm \Big(C+\frac{\Lip(f_j'')}{2\abs{f_j'(\oa)}}+\frac{\alpha^2e}4\Big)^2 \delta(a)^6.
}
But, by $\abs{\Mik{f_k}}\le 1$ we get $\abs{\Mik{f_j}(\oa)-\Ar_{k=1}^n\Mik{f_k}(\oa)} \le 2$, moreover $\Var(a) \le \delta(a)^2$. Whence
\Eq{*}{
\Big(\QA{f_j}(a)-\Ar_{k=1}^n \QA{f_k}(a)\Big)^2&= \tfrac14 \Var(a)^2 \cdot \Big(\Mik{f_j}(\oa)-\Ar_{k=1}^n\Mik{f_k}(\oa)\Big)^2 \\
&\pm 2 \cdot \Big(C+\frac{\Lip(f_j'')}{2\abs{f_j'(\oa)}}+\frac{\alpha^2e}4\Big) \delta(a)^5
\pm \Big(C+\frac{\Lip(f_j'')}{2\abs{f_j'(\oa)}}+\frac{\alpha^2e}4\Big)^2 \delta(a)^6.
}

We now apply the operator $\Ar_{j=1}^n$ side-by-side to the equality above to obtain
\Eq{X1}{
\Var(\QAself{\bff}(a))&=\frac14 \Var(a)^2 \Var(\Mik{\bff}(\oa)) 
\pm \Ar_{j=1}^n \Big(2C+\frac{\Lip(f_j'')}{\abs{f_j'(\oa)}}+\frac{\alpha^2e}2\Big) \delta(a)^5 \\
&\qquad \pm \Ar_{j=1}^n \Big(C+\frac{\Lip(f_j'')}{2\abs{f_j'(\oa)}}+\frac{\alpha^2e}4\Big)^2 \delta(a)^6.
}
But 
\Eq{X2}{
\Ar_{j=1}^n \Big(2C+\frac{\Lip(f_j'')}{\abs{f_j'(\oa)}}+\frac{\alpha^2e}2\Big)
=2C+\Ar_{j=1}^n \Big(\frac{\Lip(f_j'')}{\abs{f_j'(\oa)}}+\frac{\alpha^2e}2\Big)=4C.
}
Additionally
\Eq{X3}{
\Ar_{j=1}^n \Big(C+\frac{\Lip(f_j'')}{2\abs{f_j'(\oa)}}+\frac{\alpha^2e}4\Big)^2
&=C^2+2C \cdot \Ar_{j=1}^n \Big(\frac{\Lip(f_j'')}{2\abs{f_j'(\oa)}}+\frac{\alpha^2e}4\Big)+\Ar_{j=1}^n \Big(\frac{\Lip(f_j'')}{2\abs{f_j'(\oa)}}+\frac{\alpha^2e}4\Big)^2\\
&=C^2+2C^2+C_2^2=3C^2+C_2^2.
}
Binding \eq{X1}, \eq{X2}, and \eq{X3} we obtain the final statement.
\end{proof}

\begin{xrem}
As the values of $f_j$-s outside the interval $[\min a,\max a]$ do not affect to the left hand side of the inequality \eq{E:main}, we can simply assume that $I=[\min a,\max a]$ i.e. take a Lipschitz constant on the restricted domain only.
\end{xrem}

\begin{cor}
 Let $\bff=(f_1,\dots,f_n) \in \SmLip(I)^n$ and $a \in I^n$. Consider the mapping $\QAself{\bff}:=(\QA{f_1},\dots,\QA{f_n}) \colon I^n \to I^n$.
 Then either the iteration process $\QAself{\bff}$ is degenerated or 
 \Eq{*}{
 \lim_{k \to \infty} \frac{\Var \QAself{\bff}^{k+1}(a)}{\big(\Var \QAself{\bff}^{k}(a)\big)^2}=\frac{\Var\big(\Mik{\bff}(\QA{\otimes}(a))\big)}4.
 }
\end{cor}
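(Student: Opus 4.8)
The plan is to apply the Main Theorem along the orbit and pass to the limit, the only delicate point being that the remainders there are of order $\delta^5$ and $\delta^6$ while we divide by $\Var^2$. Write $a^{(k)}:=\QAself{\bff}^{k}(a)$ and assume the process is non-degenerated, so that $\Var(a^{(k)})>0$ for every $k$ and each ratio in the statement is well defined. Since every quasi-arithmetic mean lies between the smallest and the largest entry of its argument, the iterates never leave the compact interval $J:=[\min a,\max a]$; I would invoke the Remark following the Main Theorem to replace $I$ by $J$. On $J$ each $f_k''$ is Lipschitz with a fixed constant, $f_k'$ is continuous and bounded away from $0$, and the functions $\Mik{f_k}$ are bounded, so a single $K$ serves all iterates and the quantities $C,C_2$ of the Main Theorem, evaluated at $\overline{a^{(k)}}\in J$, stay below fixed bounds $\bar C,\bar C_2$ independent of $k$.

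Next I would feed $a^{(k)}$ into the Main Theorem to obtain, for all large $k$ (so that $\delta(a^{(k)})<1$ and hence $\delta^6\le\delta^5$),
\Eq{*}{
\Var(a^{(k+1)})=\tfrac14\Var(a^{(k)})^2\,\Var\big(\Mik{\bff}(\overline{a^{(k)}})\big)\pm E\,\delta(a^{(k)})^5,
}
where $E$ collects the uniform constants $\bar C,\bar C_2,K$. Dividing by $\Var(a^{(k)})^2$ reduces the claim to showing that $\delta(a^{(k)})^5/\Var(a^{(k)})^2\to0$ and that the leading factor converges. The key elementary estimate is a lower bound for the variance in terms of the spread: since the two extreme entries of $a$ deviate from $\oa$ by $\delta(a)$ in total, at least one of them deviates by $\ge\delta(a)/2$, whence $\Var(a)\ge\delta(a)^2/(4n)$, so that $\delta(a)^2\le 4n\Var(a)$ and
\Eq{*}{
\frac{\delta(a^{(k)})^5}{\Var(a^{(k)})^2}\le 16n^2\,\delta(a^{(k)}).
}

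Finally I would let $k\to\infty$. Because the means are strict, $\delta(a^{(k)})\to0$ and $\Var(a^{(k)})\to0$, which forces the error ratio above to vanish; and because each coordinate of $a^{(k)}$ tends to the invariant mean $\QA{\otimes}(a)$, its arithmetic mean $\overline{a^{(k)}}$ does too, so continuity of $\Mik{\bff}=(f_1''/f_1',\dots,f_n''/f_n')$ gives $\Var(\Mik{\bff}(\overline{a^{(k)}}))\to\Var(\Mik{\bff}(\QA{\otimes}(a)))$. Combining these yields the asserted limit $\tfrac14\Var(\Mik{\bff}(\QA{\otimes}(a)))$. The main obstacle is precisely the variance–spread lower bound: without it the $\delta^5$ remainder need not be negligible against $\Var^2$. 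The only other thing to watch is the uniformity of the constants $C,C_2,K$ along the orbit, which the compactness of $J$ secures.
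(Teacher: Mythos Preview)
Your argument is correct and follows essentially the same route as the paper: apply the Main Theorem along the orbit, divide by $\Var(a^{(k)})^2$, control the remainder via the elementary inequality $\Var(a)\ge c_n\,\delta(a)^2$, and pass to the limit using $\delta(a^{(k)})\to 0$ and $\overline{a^{(k)}}\to\QA{\otimes}(a)$. The only cosmetic differences are that the paper invokes the rescaling $f(x)\mapsto f(x/K)$ to normalise $K=1$ once and for all (rather than carrying uniform bounds $\bar C,\bar C_2$ via compactness of $J$ as you do), and it uses the slightly sharper bound $\Var(a)>\delta(a)^2/(2n)$ in place of your $\delta(a)^2/(4n)$; neither affects the conclusion.
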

\begin{proof}
Assume that the iteration process is not degenerated. Applying the machinery described in \cite[section 4.1]{Pas16a} we can assume that $\bff \in \SmLip_1(I)^n$.
We know that 
\Eq{E:Varineq}{
\Var(a) \in (\delta(a)^2/2n, \delta(a)^2).
}
Thus, if we divide \eq{E:main} side-by-side by $\Var(a)^2$ we get
\Eq{*}{
\frac{\Var(\QAself{\bff}(a))}{\Var(a)^2}&=\frac14 \Var(\Mik{\bff}(\oa)) \pm 16n^2C \delta(a) \pm 4n^2(3C^2+C_2^2)  \delta(a)^2.
} 
If we now put $a \leftarrow \QAself{\bff}^k(a)$, we obtain
\Eq{*}{
\frac{\Var(\QAself{\bff}^{k+1}(a))}{\Var(\QAself{\bff}^k(a))^2}&=\frac14 \Var\Big(\Mik{\bff}\Big(\overline{\QAself{\bff}^k(a)}\Big)\Big) \pm 16n^2C \delta(\QAself{\bff}^k(a)) \pm 4n^2(3C^2+C_2^2)  \delta(\QAself{\bff}^k(a))^2.
}
But we know that $\delta(\QAself{\bff}^k(a)) \to 0$ and $\QAself{\bff}^k(a) \to m$ for all $k$. Therefore
\Eq{*}{
\lim_{k \to \infty } \frac{\Var(\QAself{\bff}^{k+1}(a))}{\Var(\QAself{\bff}^k(a))^2}&=\frac{\Var\big(\Mik{\bff}(m)\big)}4,
}
what concludes the proof.
\end{proof}

By the property \eq{E:Varineq} we also obtain
\begin{cor}
 Let $\bff=(f_1,\dots,f_n) \in \SmLip(I)^n$ and $a \in I^n$. Consider a mapping $\QAself{\bff}:=(\QA{f_1},\dots,\QA{f_n}) \colon I^n \to I^n$.
 Then either the iteration process $\QAself{\bff}$ is degenerated or $(\delta(\QAself{\bff}^k))_{k=1}^{\infty}$ tends to zero quadratically.
\end{cor}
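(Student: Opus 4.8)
The plan is to reduce the whole matter to the preceding corollary and to transfer its conclusion from the variance to the spread by means of the two-sided estimate \eq{E:Varineq}. Suppose the iteration is not degenerate (otherwise there is nothing to prove) and take $a$ nonconstant (for constant $a$ the claim is trivial); write $v_k:=\QAself{\bff}^k(a)$ for brevity. Since the process is non-degenerate, no nonconstant vector is ever sent to a constant one, so $v_k$ stays nonconstant and $\Var(v_k)>0$ for every $k$. Hence the ratios $r_k:=\Var(v_{k+1})/\Var(v_k)^2$ are well defined, and the preceding corollary tells us that $r_k$ converges to the finite number $\tfrac14\Var(\Mik{\bff}(\QA{\otimes}(a)))$. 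In particular $(r_k)$ is bounded; I would set $L:=\sup_k r_k<\infty$, so that $\Var(v_{k+1})\le L\,\Var(v_k)^2$ for all $k$.

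Next I would feed this quadratic decay of the variance through \eq{E:Varineq}, which states that $\Var(b)\in(\delta(b)^2/2n,\delta(b)^2)$ for every $b$ with $n$ entries. Applying the lower bound to $v_{k+1}$ and the upper bound to $v_k$ gives
\Eq{*}{
\delta(v_{k+1})^2<2n\,\Var(v_{k+1})\le 2nL\,\Var(v_k)^2<2nL\,\delta(v_k)^4,
}
and taking square roots yields $\delta(v_{k+1})<\sqrt{2nL}\,\delta(v_k)^2$. Because $\Var(v_k)\to0$ forces $\delta(v_k)^2<2n\,\Var(v_k)\to0$, the sequence $(\delta(v_k))$ converges to $0$, and the displayed inequality shows that it does so quadratically, which is exactly the assertion.

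The only point that genuinely requires attention is that \eq{E:Varineq} compares $\Var$ and $\delta^2$ only up to the multiplicative gap $2n$. Consequently the argument above delivers merely a \emph{uniform bound} on the ratio $\delta(v_{k+1})/\delta(v_k)^2$, not a true limit; this is precisely why the statement is phrased as quadratic convergence without an explicit constant, whereas the preceding corollary could name the exact variance limit. Extracting a sharp limit for $\delta$ would require controlling the limiting shape of the normalized deviation vectors $v_k-\overline{v_k}$, which is not needed here. In short, all the analytic substance — the expansion of Lemma~\ref{lem:QAexpansion}, the error estimates of Lemma~\ref{lem:Rest_estim}, and the resulting variance asymptotics — has already been established, and the remaining task is just the short comparison carried out above.
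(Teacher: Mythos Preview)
Your proposal is correct and follows precisely the route the paper takes: the paper's own proof is just the one-line remark that the result follows from \eq{E:Varineq} together with the preceding corollary, and you have simply spelled out that passage in detail. The bookkeeping (non-degeneracy $\Rightarrow$ $\Var(v_k)>0$, boundedness of the convergent ratio sequence, and the two-sided comparison $\delta(b)^2/2n<\Var(b)<\delta(b)^2$) is handled cleanly, and your closing remark about why only a uniform bound---not a sharp limit---is obtained for the spread is an accurate reading of the situation.
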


\def\cprime{$'$} \def\R{\mathbb R} \def\Z{\mathbb Z} \def\Q{\mathbb Q}
  \def\C{\mathbb C}

\end{document}